\theoremstyle{plain}
\newtheorem{theorem}{Theorem}[section]
\newtheorem{proposition}[theorem]{Proposition}
\newtheorem{lemma}[theorem]{Lemma}
\theoremstyle{definition}
\newtheorem{definition}[theorem]{Definition}
\newtheorem{example}[theorem]{Example}
\newtheorem{remark}[theorem]{Remark}
\DeclareMathOperator{\Z}{\mathbb{Z}}
\DeclareMathOperator{\N}{\mathbb{N}}
\DeclareMathOperator{\F}{\mathbb{F}}
\DeclareMathOperator{\R}{\mathbb{R}}
\DeclareMathOperator{\Hom}{Hom}
\newcommand{\FZr}{\F^{\Z^r}}
\newcommand{\FZrfin}{\F^{(\Z^r)}}
\newcommand{\Dprime}{\mathbf{D}'}
\newcommand{\Aprime}{\mathbf{A}'}
\newcommand{\dx}{\sum_{\alpha \in \Z^r} d_\alpha X^\alpha}
\newcommand{\wy}{\sum_{\alpha \in \Z^r} W_\alpha Y^\alpha}
\title{Algebraic Framework for Discrete Dynamical Systems over Laurent Series}
\author[Ramamonjy Andriamifidisoa]{Ramamonjy Andriamifidisoa}
\address{Département de Mathématiques et Informatique, Faculté des Sciences, Université d’Antananarivo, B.P. 906, Madagascar}
\address{Institut Supérieur Polytechnique de Madagascar, Ambatomaro - Antsobolo - Antananarivo, Madagascar}
\email{ramamonjy.andriamifidisoa@univ-antananarivo.mg}
\author[Loukman Ben Saindou]{Loukman Ben Saindou}
\address{Direct Aid, Centre LAMA YACOUB, Patsy, Anjouan, Comores}
\email{saindouloukmanbenomar@gmail.com}
\date{April 30, 2025}
\begin{document}

\begin{abstract}
We generalize the framework of discrete algebraic dynamical systems \cite{Andriamifidisoa2014} to Laurent polynomials and series over \(\Z^r\), enabling the modeling of bidirectional discrete systems. By redefining the spaces \(\Dprime\) and \(\Aprime\), introducing a bilinear mapping (defined as the scalar product in Section 3), and extending the shift operator, we preserve the duality and adjoint properties of \cite{Andriamifidisoa2014}. These properties are rigorously proved and illustrated through examples and a data processing case study on bidirectional sequence transformations. In contrast to Oberst \cite{Ob90}, our algebraic approach emphasizes the structure of Laurent series, providing a streamlined framework for multidimensional systems. This work addresses an open question from \cite{Andriamifidisoa2014} and has applications in multidimensional data processing, such as image filtering and control theory.

\textbf{Keywords}: Discrete algebraic dynamical systems, Laurent series, shift operator, duality, multidimensional systems.
\end{abstract}

\maketitle

\section{Introduction}
Discrete algebraic dynamical systems, as developed in \cite{Andriamifidisoa2014}, model multidimensional systems using polynomials in \(\mathbf{D} = \F[X_1, \dots, X_r]\) and sequences in \(\mathbf{A} = \F^{\N^r}\), indexed by \(\N^r\). Our work also builds on recent developments in discrete linear dynamical systems \cite{An21} and the algebraic theory of multidimensional systems \cite{OS12}. \\ Extending the time set from \(\N^r\) to \(\Z^r\) enables modeling bidirectional systems, such as two-sided filters in data processing and control systems with infinite memory. This work extends \cite{Andriamifidisoa2014} to Laurent polynomials in \(\Dprime = \F[X_1, X_1^{-1}, \dots, X_r, X_r^{-1}]\) and Laurent series in \(\Aprime = \F[[Y_1, Y_1^{-1}, \dots, Y_r, Y_r^{-1}]]\), corresponding to \(\Z^r\).

\begin{figure}[h]
  \centering
 \begin{tikzpicture}
 \begin{scope}[xshift=-4cm]
   \fill[blue] (0,0) circle (2pt) node[below left] {$(0,0)$};
   \fill[blue] (1,0) circle (2pt);
   \fill[blue] (0,1) circle (2pt);
   \fill[blue] (1,1) circle (2pt);
   \fill[blue] (2,0) circle (2pt);
   \fill[blue] (0,2) circle (2pt);
   \fill[blue] (2,1) circle (2pt);
   \fill[blue] (1,2) circle (2pt);
   \fill[blue] (2,2) circle (2pt);
  \draw[->] (-0.5,0)-- (3,0) node[below]{$\alpha_1$};
  \draw[->] (0,-0.5)-- (0,3) node[left]{$\alpha_2$};
  \node at (1.5,3.5) {(a) $\mathbb{N}^2$};
 \end{scope}
 \begin{scope}[xshift=4cm]
   \fill[red] (0,0) circle (2pt) node[below left] {$(0,0)$};
   \fill[red] (1,0) circle (2pt);
   \fill[red] (-1,0) circle (2pt);
   \fill[red] (0,1) circle (2pt);
   \fill[red] (0,-1) circle (2pt);
   \fill[blue] (1,1) circle (2pt);
   \fill[blue] (-1,-1) circle (2pt);
   \fill[blue] (1,-1) circle (2pt);
   \fill[blue] (-1,1) circle (2pt);
  \draw[->] (-3,0)-- (3,0) node[below]{$\alpha_1$};
  \draw[->] (0,-3)-- (0,3) node[left]{$\alpha_2$};
  \node at (1.5,3.5) {(b) $\mathbb{Z}^2$};
 \end{scope}
\end{tikzpicture}
\caption{Comparaison of $\mathbb{N}^r$ and $\mathbb{Z}^r$  Indexing Grids.\\ (a) Unidirectional grid $\mathbb{N}^2$ (for $r=2$), showing non-negative indices $(\alpha_1, \alpha_2) \in \mathbb{N}^2$. Points (in blue) are restricted to the positive quadrant, suitable for causal systems (e.g., unidirectional image filtering). \\ (b) Bidirectional grid $\mathbb{Z}^2$, including negative indices, allowing bidirectional transformations (e.g., non-causal filters with infinite memory).}
\label{fig:Nr_Zr}
\end{figure}
Our extension preserves the duality and adjoint properties of \cite{Andriamifidisoa2014}. Unlike Oberst \cite{Ob90}, who focuses on constant systems, we emphasize the algebraic structure of Laurent series, with explicit constructions for a bilinear mapping (defined as the scalar product in Section 3) and shift operator, supported by examples and a case study. We prove these properties hold in \(\Z^r\). Applications include multidimensional data processing, such as image filtering, and control theory. To the best of our knowledge, no recent works have explored this algebraic approach for bidirectional systems over \(\Z^r\).

The paper is organized as follows: Section 2 introduces definitions with examples, Section 3 presents results on duality, the shift operator, and a case study, and Section 4 concludes with future directions.

\section{Definitions and Notations}

\subsection{Notations}

Let \(\N\) denote the natural numbers, \(\Z\) the integers, \(\F\) a commutative field, and \(r \geq 1\). The set \(\Z^r\) is the \(r\)-dimensional integer lattice, serving as the time set for discrete systems.

\begin{itemize}
  \item \textbf{Sequence Spaces}:
  \begin{itemize}
     \item \(\FZr = \{ W : \Z^r \to \F \}\), the vector space of sequences \( W = (W_\alpha)_{\alpha \in \Z^r}\), with \( W(\alpha) = W_\alpha \in \F \).
    \item \(\FZrfin = \{ W \in \FZr \mid W_\alpha \neq 0 \text{ for finitely many } \alpha \}\), the subspace of sequences with finite support.
  \end{itemize}
Both \(\FZr\) and \(\FZrfin\) are \(\F\)-vector spaces under pointwise addition and scalar multiplication.
\end{itemize}

For \(\alpha = (\alpha_1, \dots, \alpha_r) \in \Z^r\), define monomials \( X^\alpha = X_1^{\alpha_1} \cdots X_r^{\alpha_r} \), \( Y^\alpha = Y_1^{\alpha_1} \cdots Y_r^{\alpha_r} \), with \( X_i, X_i^{-1}, Y_i, Y_i^{-1} \) formal variables. For \(\alpha \in \Z^r\), let \(\delta_{\alpha} : \Z^r \to \F\) be:
\[
\delta_{\alpha}(\beta) =
\begin{cases}
1 & \text{if } \alpha = \beta, \\
0 & \text{otherwise}.
\end{cases}
\]

\subsection{Spaces}

\begin{definition}[Spaces]
Let \(\F\) be a commutative field and \(r \geq 1\). Define:
\begin{enumerate}
    \item \(\Dprime = \F[X_1, X_1^{-1}, \dots, X_r, X_r^{-1}]\), the ring of Laurent polynomials, with elements:
        \[
        d(X) = \dx, \quad d_\alpha \in \F, \quad \text{finitely many } d_\alpha \neq 0.
        \]
    \item \(\Aprime = \F[[Y_1, Y_1^{-1}, \dots, Y_r, Y_r^{-1}]]\), the space of Laurent series, with elements:
        \[
        W(Y) = \wy, \quad W_\alpha \in \F.
        \]
\end{enumerate}
\end{definition}

\begin{remark}[Sequences and Signals]
A \emph{sequence} is an element \( W(Y) = \wy \in \Aprime \), where \( W_\alpha \in \F \) are coefficients indexed by \(\Z^r\). In applications, such sequences are often called \emph{signals}, representing discrete data indexed by time or space (e.g., audio samples for \( r=1 \) or image pixels for \( r=2 \)). We use ``sequence'' to emphasize the algebraic structure, reserving ``signal'' for applied contexts.
\end{remark}

\begin{example}[Laurent Polynomials]
\begin{enumerate}
    \item For \( r=1 \), the Laurent polynomial \( d(X) = 5X^{-1} - 3X^2 \) has coefficients \( d_{-1} = 5 \), \( d_2 = -3 \), and \( d_k = 0 \) for all other \( k \in \Z \).
    \item For \( r=2 \), an example is \( d(X_1, X_2) = X_1^{-1} X_2 + 3 X_1^2 X_2^{-2} \), with non-zero coefficients at \( \alpha = (-1, 1) \) and \( \alpha = (2, -2) \).
\end{enumerate}
\end{example}

\begin{example}[Laurent Series]
For \( r=1 \), a Laurent series is \( W(Y) = \dots + 2Y^{-1} + Y^1 + 3Y^2 + \dots \).
\end{example}

 \begin{example} [Unidirectional vs. Bidirectional Indexing]
For $r=1$, consider a sequence $W(Y) = \sum_{n \in \mathbb{Z}} W_n Y^n$.
\begin{itemize}
  \item In $\mathbb{N}$ (unidirectional), $W_n=0\;\text{for}\; n<0$, representing a causal sequence (e.g., audio samples starting at $t=0$).
  \item In $\mathbb{Z}$ (bidirectional), $W_n$ can be non-zero for $n<0$, enabling bidirectional transformations, such as a smoothing filter $P(X)=0.5X^{-1}+0.5X$, where $W'_n=0.5 ( W_{n-1} + W_{n+1})$, using past and future values (see Example \ref{ex:bist}).
\end{itemize}
\end{example}

\begin{proposition} [ Isomorphisms]
  Let \(\F\) be a commutative field and $r \geq 1$. The following isomorphisms hold :
  \begin{enumerate}
    \item \(\Dprime \simeq \FZrfin\), via the map \( X^\alpha \leftrightarrow \delta_\alpha \).
    \item \(\Aprime \simeq \FZr\), via the map  \( W(Y) \leftrightarrow (W_\alpha)_{\alpha \in \Z^r} \).
  \end{enumerate}
\end{proposition}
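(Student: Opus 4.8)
The plan is to treat the two statements separately, since each is an $\F$-linear bijection exhibited by writing down an explicit inverse; no dimension-counting argument is available because both spaces are infinite-dimensional. For (1), I would define $\varphi : \Dprime \to \FZrfin$ by $\varphi(X^\alpha) = \delta_\alpha$ and extend $\F$-linearly, so that $\varphi(\dx) = (d_\alpha)_{\alpha \in \Z^r}$. The first thing to check is that the target genuinely lands in $\FZrfin$: a Laurent polynomial has only finitely many nonzero $d_\alpha$ by definition, so $(d_\alpha)_{\alpha \in \Z^r}$ has finite support. Injectivity follows from the linear independence of the monomials $\{X^\alpha\}_{\alpha \in \Z^r}$ (equivalently, $\varphi(d) = 0$ forces every $d_\alpha = 0$), and surjectivity follows because every finitely supported sequence $(c_\alpha)_{\alpha \in \Z^r}$ is the image of $\sum_{\alpha \in \Z^r} c_\alpha X^\alpha$, which is a legitimate element of $\Dprime$ precisely because its support is finite. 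Thus $\varphi$ carries the basis $\{X^\alpha\}$ of $\Dprime$ bijectively onto the basis $\{\delta_\alpha\}$ of $\FZrfin$. If one wants the isomorphism to respect the ring structure as well, I would additionally note that $X^\alpha X^\beta = X^{\alpha+\beta}$ corresponds to the convolution $\delta_\alpha * \delta_\beta = \delta_{\alpha+\beta}$, identifying $\Dprime$ with the group algebra $\F[\Z^r]$.

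For (2), I would define $\psi : \Aprime \to \FZr$ by $\psi(\wy) = (W_\alpha)_{\alpha \in \Z^r}$, i.e.\ coefficient extraction. Here there is no support restriction, so well-definedness and injectivity both rest on the fact that, by the very definition of $\Aprime$, the formal expression $\wy$ determines and is determined by its family of coefficients; that is, two Laurent series are equal if and only if all corresponding coefficients agree. Surjectivity is immediate, since any $W : \Z^r \to \F$ yields the formal series $\sum_{\alpha \in \Z^r} W(\alpha) Y^\alpha \in \Aprime$, giving an explicit inverse. Linearity of $\psi$ and of its inverse is inherited from the pointwise addition and scalar multiplication on $\FZr$ matching the coefficientwise operations on $\Aprime$.

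I expect the only points requiring genuine care to be the well-definedness issues rather than the bijections themselves: for (1), confirming that the finite-support condition transfers correctly in both directions, and for (2), being explicit that $\Aprime$ is defined as the space of formal sums with \emph{arbitrary} coefficients, so that coefficient extraction is a tautological bijection of sets. I would also flag that $\Aprime$ is only an $\F$-vector space, since the product of two bilateral Laurent series need not be defined (its coefficients would be infinite sums). Consequently I would state both isomorphisms as isomorphisms of $\F$-vector spaces, promoting (1) to a ring isomorphism only as an optional remark.
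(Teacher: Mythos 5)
Your proposal is correct and follows essentially the same route as the paper: exhibit the explicit coefficient maps, check that finite support transfers in part (1), and observe that part (2) is a tautological bijection with an explicit inverse. Your additional remarks (injectivity via linear independence of the monomials, the group-algebra ring structure in (1), and the caveat that \(\Aprime\) carries only an \(\F\)-vector-space structure) go slightly beyond the paper's proof but do not change the argument.
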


\begin{proof}
For (1), the map \( \phi: \mathbf{D}^{\prime} \to \mathbb{F}^{\left(\mathbb{Z}^r\right)}\), defined by \(\phi(X^\alpha)=\delta_\alpha\), extends linearly to \( d(X)=\sum_{\alpha \in \mathbb{Z}^r} d_\alpha X^\alpha \mapsto \sum_{\alpha \in \mathbb{Z}^r} d_\alpha \delta_\alpha\). Since \( d(X)\) has finitely many non-zero $d_\alpha$, the image lies in $\mathbb{F}^{\left(\mathbb{Z}^r\right)}$. The map is bijective, as each $\delta_\alpha$ corresponds uniquely to $X^\alpha$.\\
For (2), the map \( \psi: \mathbf{A}^{\prime} \to \mathbb{F}^{\left(\mathbb{Z}^r\right)}\), defined by \(\psi(W(Y))=(W_\alpha)_{\alpha \in \mathbb{Z}^r}\), where $W(Y)=\sum_{\alpha \in \mathbb{Z}^r} W_\alpha Y^\alpha$, is bijective, as it associates each Laurent series with its sequence of coefficients, with inverse mapping $(W_\alpha)_{\alpha \in \mathbb{Z}^r} \mapsto \sum_{\alpha \in \mathbb{Z}^r } W_\alpha Y^\alpha$.
\end{proof}

\subsection{Discrete Linear Dynamical Systems}

\begin{definition}[ General Discrete Linear Dynamical System]
 Following Willems \cite{Wi89,Wi91,Wi98}, a discrete linear dynamical system over $\mathbb{Z}^r$ is a triplet $\Sigma=(\mathbb{T}, \mathbb{W}, \mathcal{B})$, where :
\begin{itemize}
  \item $\mathbb{T}=\mathbb{Z}^r$ is the time set,
  \item $\mathbb{W}=\mathbb{F}^l$ is the signal space ( an $\mathbb{F}$-vector space),
  \item $\mathcal{B} \subseteq (\mathbf{A}')^l$ is the behavior, a $\mathbf{D}'$-submodule representing the set of admissible signal trajectories.
 \end{itemize}
\end{definition}

\begin{remark}
  The behavioral approach of Willems \cite{Wi89,Wi91,Wi98} provides a general framework for modeling dynamical systems, where the behavior $\mathcal{B}$ describes all possible trajectories of the system without distinguishing inputs and outputs. This flexibility allows modeling a wide range of systems, including those not necessarily defined by linear equations.
\end{remark}

\begin{example} [ Periodic Trajectories]
  For $r=1,\; \mathbb{F}=\mathbb{R},\;\mathbb{T}=\mathbb{Z},\;\mathbb{W}=\mathbb{R}$, and $\mathcal{B}=\{ W \in \mathbb{A}^{\prime} \mid W_{n+2}=W_n, \forall n \in \mathbb{Z}\}$. This behavior describes sequences that are periodic with period 2. For instance, a valid trajectory is $W(Y)=\sum_{n \in \mathbb{Z} } a_n Y^n,\;\text{where}\; a_n = 1\; \text{if}\; n$ is even and $a_n=0$ if $n$ is odd, i.e., $W(Y)=\sum_{n\; \text{even}} Y^n$. This system is linear and shift-invariant but not necessarily defined by a Laurent polynomial kernel.
\end{example}

\begin{example} [Bounded Trajectories]
  For $r=2,\; \mathbb{F}=\mathbb{R},\; \mathbb{T}= \mathbb{Z}^2,\; \mathbb{W}=\mathbb{R}$ and $\mathcal{B}=\{ W \in \mathbb{A}^{\prime} \mid |W_\alpha|\leq 1, \forall\; \alpha \in \mathbb{Z}^2 \}$. This behavior consists of all sequences ( or signals, e.g, image pixel intensities) whose values are bounded by 1. An example trajectory is $W(Y_1,Y_2)= \sum_{\alpha \in \mathbb{Z}^2}W_\alpha Y^\alpha$, where $W_\alpha= \sin(\alpha_1+\alpha_2)$. This system illustrates the generality of willems' framework, as $\mathcal{B}$ is not defined by a polynomial equation but is still a valid $\mathbf{D}'$-submodule.
\end{example}

\begin{definition} [Autoregressif Systems]
A discrete linear dynamical system is called \emph{autoregressive} if its behavior is defined as :
\begin{equation*}
    \mathcal{B}= \ker R(X)= \{ W(Y)= ( W_1(Y), \ldots,  W_l(Y)) \in (\mathbf{A}')^l \mid R(X) \circ W(Y)= 0\},
  \end{equation*}
where $R(X) \in (\mathbf{D}')^{k,l}$  is a matrix of Laurent polynomials, and the shift operator $\circ$ is given by :
\begin{equation*}
 R(X) \circ W(Y)= \left( \sum_{j=1}^l R_{ij}(X) \circ W_j(Y) \right)_{1 \leq i \leq k},
\end{equation*}
 with, for $R_{ij}(X)=\sum_{\alpha \in \mathbb{Z}^r} R_{ij\alpha} X^\alpha$, $W_j(Y)= \sum_{\beta \in \mathbb{Z}^r} W_{j\beta} Y^\beta$,
 \begin{equation*}
   R_{ij}(X) \circ W_j(Y)= \sum_{\gamma \in \mathbb{Z}^r} \left( \sum_{\alpha \in \mathbb{Z}^r} R_{ij\alpha} W_{j(\alpha+\gamma)} \right) Y^\gamma.
 \end{equation*}
 \end{definition}

\begin{remark}
  The autoregressive systems, as studied by Oberst \cite{Ob90}, are a special case of Wilems' framework \cite{Wi89,Wi91,Wi98}, where the behavior $\mathcal{B}$ is defined by the kernel of Laurent polynomial matrix. This structure is particularly suited for bidirectional systems over $\mathbb{Z}^r$, enabling explicit algebraic representations of system dynamics, such as bidirectional filters.
\end{remark}

\begin{example}[Bidirectional Difference System]
For $r=1, k = l = 1, \mathbb{F}= \mathbb{R}$, consider $R(X)=X - X^{-1} \in \mathbf{D}'=\mathbb{R}[X, X^{-1}]$. The system is defined by :
\begin{equation*}
  \mathcal{B}=\ker R(X)=\{ W(Y) \in \mathbf{A}' \mid (X- X^{-1}) \circ W(Y)=0 \}.
\end{equation*}
Let $W(Y)=\sum_{n \in \mathbb{Z}} W_n Y^n$. The equation $(X - X^{-1}) \circ W(Y)=0$ yields :
 \begin{equation*}
  X \circ W(Y) - X^{-1} \circ W(Y) = \sum_{n \in \mathbb{Z}} W_{n+1} Y^n - \sum_{n \in \mathbb{Z}} W_{n-1} Y^n=0,
 \end{equation*}
 Implying :
 \begin{equation*}
   W_{n+1}=W_{n-1}, \quad \forall n \in \mathbb{Z}.
 \end{equation*}
Solutions are sequences where $W_n=a$ for even $n$ and $W_n=b$ for odd $n$, with $a, b \in \mathbb{R}$. For instance, $W_n=1$ for even $n$ and $W_n=0$ for odd $n$ gives a valid trajectory $ W(Y)=\sum_{n\; \text{even}} Y^n$.
\end{example}

\begin{example} [Two-Variable-System]
 For $r=1, k=l=2, \mathbb{F}=\mathbb{R}$, consider :
\begin{equation*}
  R(X)=\begin{pmatrix}
         X+X^{-1} & 1 \\
         0 & X^{-1}-1
       \end{pmatrix}   \in (\mathbf{D}')^{2,2}.
\end{equation*}
The behavior is :
\begin{equation*}
  \mathcal{B}= \left\{ W(Y)=\begin{pmatrix}
                              W_1(Y) \\ W_2(Y)
                            \end{pmatrix}          \in (\mathbf{A}')^2  \mid R(X) \circ W(Y)=0 \right\}.
\end{equation*}

The system of equations is :
\begin{equation*}
  \left\{ \begin{array}{l}
            (X+X^{-1}) \circ W_1(Y)+ W_2(Y)=0,\\
            (X^{-1}-1)\circ W_2(Y)=0.
          \end{array} \right.
\end{equation*}

For $W_1(Y)= \sum_{n \in \mathbb{Z}} W_{1n} Y^n,\; W_2(Y)=\sum_{n \in \mathbb{Z}} W_{2n} Y^n$, the second equation gives :
 \begin{equation*}
   W_{2(n-1)} = W_{2n}, \quad \forall n \in \mathbb{Z},
 \end{equation*}
so $W_2(Y)$ is constant, say $W_{2n}(Y) = c$. The first equation becomes :

\begin{equation*}
  W_{1(n+1)} + W_{1(n-1)} = - c, \quad \forall n \in \mathbb{Z}.
\end{equation*}
If $c=0$, then $W_{1(n+1)} = - W_{1(n-1)}$, and a solution is $W_{1n} = (-1)^n d$. Thus, a trajectory is :
\begin{equation*}
  W(Y)=\begin{pmatrix}
         \sum_{n \in \mathbb{Z}}(-1)^n d Y^n \\ 0
       \end{pmatrix}, \quad d \in \mathbb{R}.
\end{equation*}
\end{example}

\begin{definition}[Bilinear Mapping]\label{def:bilinear}
Let \(\F\) be a commutative field and \( r \geq 1 \). The bilinear mapping is:
\[
\langle \cdot, \cdot \rangle : \Dprime \times \Aprime \to \F, \quad \langle d(X), W(Y) \rangle = \sum_{\alpha \in \Z^r} d_\alpha W_\alpha,
\]
where \( d(X) = \dx \in \Dprime \) and \( W(Y) = \wy \in \Aprime \).
\end{definition}

\begin{example}
For \( r=1 \), let \( d(X) = X^{-1} + 2X^2 \in \Dprime \) and \( W(Y) = \sum_{k \in \Z} W_k Y^k \in \Aprime \) with \( W_{-1} = 3 \), \( W_2 = 4 \), and \( W_k = 0 \) otherwise. Then:
\[
\langle d(X), W(Y) \rangle = d_{-1} W_{-1} + d_2 W_2 = 1 \cdot 3 + 2 \cdot 4 = 11.
\]
\end{example}

\begin{definition}[Shift Operator]
For \( P(X) = \sum_{\alpha \in \Z^r} P_\alpha X^\alpha \in \Dprime \), \( W \in \Aprime \), the shift operator is:
\[
P(X) \circ W(Y) = \sum_{\beta \in \Z^r} \left( \sum_{\alpha \in \Z^r} P_\alpha W_{\alpha + \beta} \right) Y^\beta.
\]
\end{definition}

\begin{example}
For \( r=1 \), let \( P(X) = X^1 \in \Dprime \), and \( W(Y) = \sum_{k \in \Z} W_k Y^k \in \Aprime \). Then:
\[
P(X) \circ W(Y) = \sum_{k \in \Z} ( P_1 W_{1+k} ) Y^k = \sum_{k \in \Z} W_{k+1} Y^k.
\]
\end{example}

\section{Main Results}
We present the properties of duality, the shift operator, and a practical application.

\begin{proposition}[Duality Properties of the Bilinear Mapping]\label{prop:duality}
The bilinear mapping \(\langle \cdot, \cdot \rangle : \Dprime \times \Aprime \to \F\) satisfies:
\begin{enumerate}
    \item The homomorphisms:
        \[
        \Dprime \to \Hom_{\F}(\Aprime, \F), \quad d(X) \mapsto \langle d(X), \cdot \rangle,
        \]
        \[
        \Aprime \to \Hom_{\F}(\Dprime, \F), \quad W(Y) \mapsto \langle \cdot, W(Y) \rangle,
        \]
        are injective.
    \item The latter is an isomorphism.
\end{enumerate}
\end{proposition}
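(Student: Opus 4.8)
The plan is to verify three things in turn: that both assignments are genuine \(\F\)-linear homomorphisms, that each is injective, and finally that the second is surjective. The first point is immediate from the definition of the bilinear mapping: for fixed \(d(X)\) the map \(W(Y) \mapsto \langle d(X), W(Y)\rangle\) is \(\F\)-linear in \(W\), and symmetrically in \(d\), so bilinearity of \(\langle \cdot, \cdot\rangle\) makes \(d(X) \mapsto \langle d(X), \cdot\rangle\) and \(W(Y) \mapsto \langle \cdot, W(Y)\rangle\) into \(\F\)-linear maps into the respective dual spaces. I would also record at the outset that the defining sum \(\sum_{\alpha \in \Z^r} d_\alpha W_\alpha\) is finite, because \(d(X) \in \Dprime\) has only finitely many nonzero coefficients; this is what guarantees the pairing is well-defined regardless of the (possibly infinite) support of \(W(Y)\).

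For injectivity I would test against monomials. Suppose \(d(X) \neq 0\), say \(d_\beta \neq 0\) for some \(\beta \in \Z^r\). Taking \(W(Y) = Y^\beta\), i.e. the sequence \((\delta_{\alpha\beta})_\alpha\), gives \(\langle d(X), Y^\beta\rangle = d_\beta \neq 0\), so \(\langle d(X), \cdot\rangle\) is not the zero functional; hence the first map is injective. The identical argument with the roles reversed handles the second map: if \(W(Y) \neq 0\) with \(W_\beta \neq 0\), then \(\langle X^\beta, W(Y)\rangle = W_\beta \neq 0\), so \(\langle \cdot, W(Y)\rangle \neq 0\) and the second map is injective as well.

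The substance is surjectivity of the second map \(\Aprime \to \Hom_{\F}(\Dprime, \F)\). Here I would use that \(\{X^\alpha\}_{\alpha \in \Z^r}\) is an \(\F\)-basis of \(\Dprime\) (equivalently, \(\Dprime \simeq \FZrfin\) by the Isomorphisms proposition). Given an arbitrary functional \(\lambda \in \Hom_{\F}(\Dprime, \F)\), I would set \(W_\alpha := \lambda(X^\alpha)\) and assemble the Laurent series \(W(Y) = \wy\). The key observation — and the point that must be got right — is that the values \(\lambda(X^\alpha)\) may be prescribed completely arbitrarily, and \(\Aprime \simeq \FZr\) permits arbitrary coefficients, so \(W(Y)\) is a bona fide element of \(\Aprime\) with no convergence or finiteness constraint. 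Then for any \(d(X) = \dx\) (a finite sum) linearity of \(\lambda\) yields \(\langle d(X), W(Y)\rangle = \sum_{\alpha} d_\alpha W_\alpha = \sum_{\alpha} d_\alpha \lambda(X^\alpha) = \lambda(d(X))\), so \(\langle \cdot, W(Y)\rangle = \lambda\) and the map is onto. Combined with injectivity, this gives the isomorphism.

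I expect the main conceptual obstacle to be explaining the asymmetry: why the second map is an isomorphism while the first is only asserted to be injective. The clean reason is the direct-sum/direct-product duality — \(\Dprime \simeq \bigoplus_{\alpha} \F\) and \(\Aprime \simeq \prod_{\alpha} \F\) — so that \(\Hom_{\F}(\Dprime, \F) \simeq \prod_{\alpha} \F \simeq \Aprime\), whereas \(\Hom_{\F}(\Aprime, \F)\) is strictly larger than \(\bigoplus_{\alpha} \F \simeq \Dprime\) once \(\Z^r\) is infinite. The surjectivity argument above works precisely because every functional on the free module \(\Dprime\) is determined by, and freely determines, its values on the basis; the analogous statement fails for \(\Aprime\), which is exactly why I would not attempt surjectivity of the first map and why the proposition claims the isomorphism only for the latter.
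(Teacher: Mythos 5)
Your proof is correct and follows essentially the same route as the paper: injectivity is obtained by testing the pairing against the monomials \(Y^\gamma\) and \(X^\gamma\) to recover individual coefficients, and surjectivity of the second map is obtained by defining \(W_\alpha := \lambda(X^\alpha)\) and checking agreement on a general \(d(X)\) by linearity. Your closing remark on the \(\bigoplus\)/\(\prod\) asymmetry is a useful addition but does not change the argument.
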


\begin{proof}
\begin{enumerate}
    \item \emph{Injectivity of \(\Dprime \to \Hom_{\F}(\Aprime, \F)\)}: Suppose \(\langle d_1(X), W(Y) \rangle = \langle d_2(X), W(Y) \rangle\) for all \( W(Y) \in \Aprime \), where \( d_1(X) = \dx \), \( d_2(X) = \sum_{\alpha \in \Z^r} d_{2\alpha} X^\alpha \). Take \( W(Y) = Y^\gamma \), \(\gamma \in \Z^r\), with coefficients \( (W)_\alpha = \delta_{\alpha, \gamma} \). Then:
        \[
        \langle d_1(X), Y^\gamma \rangle = d_{1\gamma}, \quad \langle d_2(X), Y^\gamma \rangle = d_{2\gamma}.
        \]
        Since \(\langle d_1(X), Y^\gamma \rangle = \langle d_2(X), Y^\gamma \rangle\), \( d_{1\gamma} = d_{2\gamma} \) for all \(\gamma \in \Z^r\). Thus, \( d_1(X) = d_2(X) \).

    \item \emph{Injectivity and surjectivity of \(\Aprime \to \Hom_{\F}(\Dprime, \F)\)}: Suppose \(\langle d(X), W_1(Y) \rangle = \langle d(X), W_2(Y) \rangle\) for all \( d(X) \in \Dprime \). Take \( d(X) = X^\gamma \). Then:
        \[
        \langle X^\gamma, W_1(Y) \rangle = (W_1)_\gamma, \quad \langle X^\gamma, W_2(Y) \rangle = (W_2)_\gamma.
        \]
        Since they are equal, \( (W_1)_\gamma = (W_2)_\gamma \) for all \(\gamma \in \Z^r\), so \( W_1 = W_2 \).

        For surjectivity, let \(\psi \in \Hom_{\F}(\Dprime, \F)\). Define \( W(Y) \in \Aprime \) by \( (W)_\alpha = \psi(X^\alpha) \). For \( d(X) = \dx \):
        \[
        \psi(d(X)) = \sum_{\alpha \in \Z^r} d_\alpha \psi(X^\alpha) = \sum_{\alpha \in \Z^r} d_\alpha (W)_\alpha = \langle d(X), W(Y) \rangle.
        \]
        Thus, \(\psi = \langle \cdot, W(Y) \rangle\), and the map is an isomorphism.
\end{enumerate}
\end{proof}

\begin{definition}[Scalar Product]\label{def:scalar}
The bilinear mapping \(\langle \cdot, \cdot \rangle : \Dprime \times \Aprime \to \F\) is called the scalar product, as it satisfies the duality properties of Proposition \ref{prop:duality}, ensuring a well-defined pairing between \(\Dprime\) and \(\Aprime\).
\end{definition}

\begin{remark}
The scalar product defined in Definition \ref{def:scalar} is inspired by bilinear forms used in the behavioral approach to dynamical systems. Specifically :
\begin{itemize}
  \item \cite{PW98} Polderman and Willems (1998) introduce a mathematical framework for systems theory, emphasizing behavioral modeling without distinguishing inputs and outputs, applicable to multidimensional systems.
  \item \cite{Rocha1989} Rocha and Willems (1989) develop the concept of state for two-dimensional systems, using bilinear forms to describe system trajectories.
  \item \cite{Zerz2000} Zerz (2000) explores the algebraic theory of multidimensional linear systems, employing bilinear forms to establish dualities between modules and behaviors.
  \item \cite{Malgrange1962} Malgrange (1962/1963) introduces foundational concepts for differential systems with constant coefficients, where bilinear forms provide a basis for our extension to Laurent series over $\mathbb{Z}^r$.
\end{itemize}
\end{remark}

\begin{example}
For \( r=1 \), let \( d(X) = X^{-1} + X \in \Dprime \), \( W(Y) = Y^{-1} + 2Y \in \Aprime \). Then:
\[
\langle d(X), W(Y) \rangle = d_{-1} W_{-1} + d_1 W_1 = 1 \cdot 1 + 1 \cdot 2 = 3.
\]
\end{example}

\begin{lemma} [Actions of Multiplication and Shift Operators]
Let $\mathbb{F}$ be a commutative field and $r \geq 1$, with $\mathbf{D}'= \mathbb{F}[X_1, X_1^{-1}, \ldots, X_r, X_r^{-1}]$ and $\mathbf{A}'=\mathbb{F}[[Y_1, Y_1^{-1}, \ldots, Y_r, Y_r^{-1}]]$.
\begin{enumerate}
  \item \textbf{Multiplication Operator on $\mathbf{D}'$}: For $d(X)=\sum_{\beta \in \mathbb{Z}^r} d_\beta X^\beta \in \mathbf{D}'$, the multiplication operator is defined as :
   \begin{equation*}
     m_d : \mathbf{D}' \to \mathbf{D}', \quad c(X) \mapsto c(X) \cdot d(X),
   \end{equation*}
  where $c(X) \cdot d(X)=\sum_{\alpha \in \mathbb{Z}^r} \left(\sum_{\gamma \in \mathbb{Z}^r} c_\gamma d_{\alpha - \gamma}\right)  X^\alpha$, with finitely many non-zero terms in the product. This operator is $\mathbb{F}$-linear and preserves the $\mathbf{D}'$-module structure of $\mathbf{D}'$.

  \item \textbf{Shift Operator on $\mathbf{A}'$}: For $d(X)=\sum_{\beta \in \mathbb{Z}^r} d_\beta X^\beta \in \mathbf{D}'$ and $W(Y)=\sum_{\alpha \in \mathbb{Z}^r} W_\alpha Y^\alpha \in \mathbf{A}'$, the shift operator is defined as :
  \begin{equation*}
    \sigma_d : \mathbf{A}' \to \mathbf{A}', \quad  W(Y) \mapsto d(X) \circ W(Y)= \sum_{\alpha \in \mathbb{Z}^r} \left( \sum_{\beta \in \mathbb{Z}^r} d_\beta W_{\alpha + \beta} \right) Y^\alpha.
  \end{equation*}
This operator is $\mathbb{F}$-linear and transforms the coefficients of $W(Y)$ by shifts indexed by $\mathbb{Z}^r$.
\end{enumerate}
\end{lemma}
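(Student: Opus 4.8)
The plan is to verify each of the two claims by unwinding the relevant definitions and checking the stated formulas directly, treating both operators in parallel since they have the same flavour. For part (1), I would first confirm that the given convolution formula for $c(X)\cdot d(X)$ is simply the usual product of two Laurent polynomials: collecting the coefficient of $X^\alpha$ in $\left(\sum_\gamma c_\gamma X^\gamma\right)\left(\sum_\delta d_\delta X^\delta\right)$ forces $\gamma+\delta=\alpha$, i.e. $\delta=\alpha-\gamma$, giving $\sum_\gamma c_\gamma d_{\alpha-\gamma}$. Since $c(X)$ and $d(X)$ each have finite support, for each fixed $\alpha$ only finitely many pairs $(\gamma,\alpha-\gamma)$ contribute, and the total number of nonzero $X^\alpha$ is finite, so $m_d(c)$ indeed lies in $\Dprime$. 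Then $\F$-linearity of $m_d$ is immediate from the distributivity of multiplication over addition and its compatibility with scalar multiplication, $m_d(\lambda c_1 + \mu c_2) = (\lambda c_1 + \mu c_2)d = \lambda(c_1 d) + \mu(c_2 d)$, and the fact that $m_d$ preserves the $\Dprime$-module structure is just associativity of the ring multiplication, $m_d(e(X)\cdot c(X)) = e(X)\cdot(c(X)\cdot d(X)) = e(X)\cdot m_d(c(X))$.

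For part (2), I would show that $\sigma_d$ is well-defined as a map into $\Aprime$ and is $\F$-linear. Well-definedness is the point that requires a moment's care: for each fixed output index $\alpha$, the coefficient is $\sum_{\beta\in\Z^r} d_\beta W_{\alpha+\beta}$, and because $d(X)\in\Dprime$ has only finitely many nonzero $d_\beta$, this inner sum is a finite $\F$-linear combination and hence a well-defined element of $\F$. There is no convergence issue precisely because $d$ has finite support, so the output $\sum_\alpha\left(\sum_\beta d_\beta W_{\alpha+\beta}\right)Y^\alpha$ is a bona fide element of $\Aprime = \FZr$ (no finiteness is imposed on its support). The $\F$-linearity of $\sigma_d$ in $W$ then follows coefficientwise: $\sum_\beta d_\beta(\lambda W_{\alpha+\beta} + \mu W'_{\alpha+\beta}) = \lambda\sum_\beta d_\beta W_{\alpha+\beta} + \mu\sum_\beta d_\beta W'_{\alpha+\beta}$, each sum being finite so the rearrangement is legitimate.

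I would organize the write-up as two short blocks mirroring the statement, and in each block separate (i) well-definedness, (ii) linearity, and (iii) the structural claim. The only genuine obstacle, and the step I expect to require the most attention, is the well-definedness of $\sigma_d$: one must emphasize that the finiteness of the support of $d(X)$ is exactly what guarantees the inner sum $\sum_\beta d_\beta W_{\alpha+\beta}$ terminates, and that this is where the asymmetry between $\Dprime$ (finite support) and $\Aprime$ (arbitrary support) is essential. If instead one tried to shift an element of $\Aprime$ by an element of $\Aprime$, the inner sum could be an infinite, ill-defined series; it is precisely the Laurent-polynomial nature of $d$ that makes the operator legitimate. Everything else reduces to the elementary bilinearity and associativity properties already implicit in the ring and module structures introduced earlier.
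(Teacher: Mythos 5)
Your proposal is correct and follows essentially the same route as the paper's own proof: both parts rest on the observation that the finite support of $d(X)$ (and of $c(X)$ in part (1)) makes every coefficient sum finite, with linearity following from distributivity and coefficientwise computation. You supply somewhat more detail than the paper (the explicit derivation of the convolution formula and the associativity argument for the $\mathbf{D}'$-module claim), but the underlying argument is the same.
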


\begin{proof}
\begin{itemize}
  \item For (1), multiplication in $\mathbf{D}'$ is well-defined since $d(X)$ and $c(X)$ are Laurent polynomials with finitely many non-zero terms, so their products is in $\mathbf{D}'$. Linearity follows from the distributivity of multiplication over addition in $\mathbf{D}'$.
  \item For (2), the shift operator is well-defined because, for each $\alpha \in \mathbb{Z}^r$, the sum $\sum_{\beta \in \mathbb{Z}^r} d_\beta W_{\alpha + \beta}$ is finite (since $d_\beta \neq 0$ for finitely many $\beta$), and the result is a Laurent series in $\mathbf{A}'$. Linearity follows from the linearity of operations in $\mathbf{A}'$.
\end{itemize}
\end{proof}

\begin{theorem}[Adjoint of the Multiplication Operator]
The adjoint of the multiplication operator :
\[
m_d : \Dprime \to \Dprime, \quad c(X) \mapsto c(X) \cdot d(X),
\]
is the shift operator:
\[
\sigma_d : \Aprime \to \Aprime, \quad W(Y) \mapsto d(X) \circ W(Y) = \sum_{\alpha \in \Z^r} \left( \sum_{\beta \in \Z^r} d_\beta W_{\alpha + \beta} \right) Y^\alpha.
\]
\end{theorem}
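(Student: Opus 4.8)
The plan is to unwind the definition of adjointness with respect to the bilinear pairing of Definition \ref{def:bilinear}: I must show that
\[
\langle m_d(c(X)), W(Y) \rangle = \langle c(X), \sigma_d(W(Y)) \rangle
\]
holds for every \( c(X) \in \Dprime \) and every \( W(Y) \in \Aprime \). Both sides are scalars in \(\F\), so the whole argument reduces to a single identity between two sums of products of coefficients, and the strategy is to expand each side explicitly and then reconcile them by a change of summation index.

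First I would compute the left-hand side. By the multiplication formula of the preceding lemma, the coefficient of \( X^\alpha \) in \( c(X) \cdot d(X) \) is \( \sum_{\gamma \in \Z^r} c_\gamma d_{\alpha - \gamma} \), so the definition of the pairing gives
\[
\langle m_d(c(X)), W(Y) \rangle = \sum_{\alpha \in \Z^r} \Bigl( \sum_{\gamma \in \Z^r} c_\gamma d_{\alpha - \gamma} \Bigr) W_\alpha.
\]
Next I would compute the right-hand side: reading off the coefficient of \( Y^\alpha \) in \( \sigma_d(W(Y)) = d(X) \circ W(Y) \) as \( \sum_{\beta \in \Z^r} d_\beta W_{\alpha + \beta} \), the pairing yields
\[
\langle c(X), \sigma_d(W(Y)) \rangle = \sum_{\alpha \in \Z^r} c_\alpha \Bigl( \sum_{\beta \in \Z^r} d_\beta W_{\alpha + \beta} \Bigr).
\]
To match them, I would substitute \( \beta = \alpha - \gamma \) in the double sum for the left-hand side, so that both expressions collapse onto the common double sum \( \sum_{\gamma} \sum_{\beta} c_\gamma d_\beta W_{\gamma + \beta} \); renaming the dummy variable then identifies the two sides.

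The step requiring genuine care — and the only real obstacle — is justifying that these manipulations are legitimate, i.e.\ that each sum is well defined and that the reordering of summation is valid. This is where the asymmetric finiteness conditions enter: \( c(X) \) and \( d(X) \) lie in \(\Dprime\) and hence have only finitely many nonzero coefficients, whereas \( W(Y) \in \Aprime \) may have infinite support. I would observe that on the left the product \( c(X) \cdot d(X) \) is again a Laurent polynomial of finite support, so \( \langle m_d(c(X)), W(Y) \rangle \) is a finite sum; on the right the outer sum is restricted by the finite support of \( c \), and each inner sum is finite by the finite support of \( d \). Consequently every sum in sight has only finitely many nonzero terms, so the interchange of summation order is an elementary rearrangement of a finite collection of scalars rather than an analytic limit, and no convergence issue can arise. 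With that finiteness bookkeeping recorded, the change of index is unconditional and the two sides coincide, establishing that \(\sigma_d\) is the adjoint of \(m_d\).
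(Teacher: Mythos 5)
Your proof is correct, but it takes a genuinely different route from the paper's. You interpret ``adjoint'' via the scalar product and verify the defining identity \(\langle m_d(c(X)), W(Y)\rangle = \langle c(X), \sigma_d(W(Y))\rangle\) directly, by expanding both sides into the common double sum \(\sum_{\gamma}\sum_{\beta} c_\gamma d_\beta W_{\gamma+\beta}\) and carefully noting that the finite supports of \(c\) and \(d\) make every rearrangement a finite one. The paper instead interprets the adjoint as the transposed map \(\Hom_{\F}(m_d,\F)\) on the dual space \(\Hom_{\F}(\Dprime,\F)\), transports it to \(\Aprime\) through the isomorphism \(W(Y)\mapsto\langle\cdot,W(Y)\rangle\) of Proposition \ref{prop:duality}(2), computes the result first for a monomial \(d(X)=X^\beta\) (obtaining \(V_\alpha = W_{\alpha+\beta}\)), and then extends to general \(d\) by linearity in \(\beta\). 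The two readings of ``adjoint'' agree precisely because the identification \(\Aprime\simeq\Hom_{\F}(\Dprime,\F)\) is implemented by the scalar product, so your verification is logically equivalent to the paper's computation. What your approach buys is a more elementary, self-contained argument that never leaves the level of coefficients and makes the finiteness bookkeeping fully explicit in one place; what the paper's approach buys is the structural picture --- it exhibits \(\sigma_d\) as the image of \(m_d\) under the contravariant functor \(\Hom_{\F}(-,\F)\), which is the sense in which the duality of Proposition \ref{prop:duality} is actually being used. Either write-up is acceptable; if you keep yours, it would be worth stating explicitly at the outset that ``adjoint'' means adjoint with respect to the pairing of Definition \ref{def:bilinear}, since the theorem statement itself does not fix that convention.
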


\begin{proof}
Let \( d(X) = \sum_{\beta \in \Z^r} d_\beta X^\beta \in \Dprime \). The multiplication operator is \( m_d : \Dprime \to \Dprime \), \( c(X) \mapsto c(X) \cdot d(X) \). Its adjoint is:
\[
\Hom_{\F}(m_d, \F) : \Hom_{\F}(\Dprime, \F) \to \Hom_{\F}(\Dprime, \F). \tag{3.1}
\]
Since \(\Aprime \simeq \Hom_{\F}(\Dprime, \F)\) (by Proposition \ref{prop:duality}), this induces a map \( \sigma_d : \Aprime \to \Aprime\). Consider \( d(X) = X^\beta \). The multiplication operator is :
\[
 m_{X^\beta} : \Dprime \to \Dprime , \quad c(X) \mapsto c(X) \cdot X^\beta.  \tag{3.2}
\]

Its adjoint is \(\Hom_{\F}(X^\beta, \F)\). For \( W(Y) = \sum_{\gamma \in \Z^r} W_\gamma Y^\gamma \in \Aprime \), viewed as \(\langle \cdot, W \rangle\), we have:
\[
\langle X^\alpha, W \rangle = W_\alpha. \tag{3.3}
\]
Thus:
\[
\Hom_{\F}(X^\beta, \F)(W)(X^\alpha) = \langle X^\beta \cdot X^\alpha, W \rangle = W_{\alpha + \beta}. \tag{3.4}
\]
If \( V(Y) = \Hom_{\F}(X^\beta, \F)(W) \), then \( V_\alpha = W_{\alpha + \beta} \), so:
\[
X^\beta \circ W(Y) = \sum_{\alpha \in \Z^r} W_{\alpha + \beta} Y^\alpha. \tag{3.5}
\]
For general \( d(X) = \sum_{\beta \in \Z^r} d_\beta X^\beta \):
\[
\Hom_{\F}(m_d, \F) = \sum_{\beta \in \Z^r} d_\beta \Hom_{\F}(X^\beta, \F). \tag{3.6}
\]
Applying to \( W(Y) \):
\[
d(X) \circ W(Y) = \sum_{\alpha \in \Z^r} \left( \sum_{\beta \in \Z^r} d_\beta W_{\alpha + \beta} \right) Y^\alpha, \tag{3.7}
\]
which is the shift operator.
\end{proof}

\begin{example}
For \( r=1 \), let \( P(X) = X^{-1} + X \in \Dprime \), \( W(Y) = \sum_{k \in \Z} W_k Y^k \in \Aprime \). Then:
\[
P(X) \circ W(Y) = \sum_{k \in \Z} (W_{k-1} + W_{k+1}) Y^k,
\]
a bidirectional sequence transformation.
\end{example}

\begin{example}[Bidirectional Sequence Transformation] \label{ex:bist}
For \( r=1 \), \(\F = \R\), let \( W(Y) = \sum_{k \in \Z} W_k Y^k \in \Aprime \) with \( W_{-1} = 1 \), \( W_0 = 2 \), \( W_1 = 3 \), and \( W_k = 0 \) otherwise. Define \( P(X) = 0.5 X^{-1} + 0.5 X \in \Dprime \), computing a weighted average of the sequence at indices \( k-1 \) and \( k+1 \). The shift operator is:
\[
P(X) \circ W(Y) = \sum_{k \in \Z} 0.5 (W_{k-1} + W_{k+1}) Y^k.
\]
Compute for \( k = -1, 0, 1 \):
\begin{itemize}
    \item For \( k = -1 \): \( 0.5 (W_{-2} + W_0)=0.5 (0 + 2) = 1\):
    \item For \( k = 0 \): \( 0.5 (W_{-1} + W_1) = 0.5 (1 + 3) = 2 \).
    \item For \( k = 1 \): \( 0.5 (W_0 + W_2) = 0.5 (2 + 0) = 1 \).
\end{itemize}
Thus, \( P(X) \circ W(Y) = Y^{-1} + 2 + Y \). The transformation is shown below:

\begin{table}[ht]
\centering
\caption{Transformation of a sequence by the bidirectional operator.}
\begin{tabular}{|c|c|c|c|c|c|}
\hline
\( k \) & \(-2\) & \(-1\) & \(0\) & \(1\) & \(2\) \\
\hline
Input \( W_k \) & 0 & 1 & 2 & 3 & 0 \\
\hline
Output & 0 & 1 & 2 & 1 & 0 \\
\hline
\end{tabular}
\end{table}
\end{example}

\section{Conclusion}
This work extends the framework of discrete algebraic dynamical systems \cite{Andriamifidisoa2014} to Laurent polynomials and series over \(\Z^r\), enabling bidirectional system modeling. By redefining \(\Dprime\), \(\Aprime\), the scalar product (Definition \ref{def:scalar}), and the shift operator, we preserve the duality and adjoint properties (Proposition \ref{prop:duality}), as shown through proofs and examples. Our approach offers a novel algebraic perspective on multidimensional systems, distinct from Oberst \cite{Ob90} or Willems \cite{Wi91}.

Applications include multidimensional data processing and control theory for systems with infinite memory, building on behavioral approaches to control as discussed in \cite{Wi98}. Future work will explore matrix-valued systems, addressing non-commutative operations, and investigate topological properties for Laurent series convergence, potentially impacting real-time data processing.

\bibliographystyle{amsplain}

\end{document}